\theoremstyle{plain}
\newtheorem{thm}{Theorem}[section]
\newtheorem{prop}[thm]{Proposition}
\newtheorem{definition}{Definition}[section] 
\date{}
\title{Solving Fuzzy Quadratic Programming Problems with a Proposed Algorithm}
\author{ \href{https://orcid.org/0000-0003-1784-2741}{\includegraphics[scale=0.06]{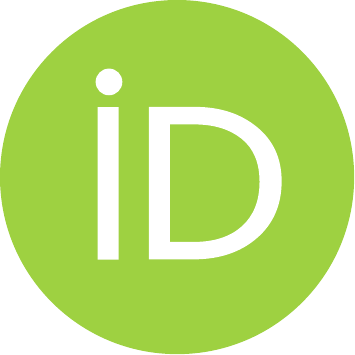}\hspace{1mm}Sajal Chakroborty}\thanks{Other affiliation : East West University, Dhaka, Bangladesh, email: sajal.math@yahoo.com} \\
	Department of Mathematics and Statistics\\
	Texas Tech University\\
	Lubbock, Texas, USA \\
	\texttt{email:sajal.chakroborty@ttu.edu} \\}
\begin{document}
\maketitle
\begin{abstract}
 The theory of fuzzy mathematics has been proven very effective for defining and solving optimization problems. Fuzzy quadratic programming (FQP) is a consequence of this approach. In this paper, an algorithm has been proposed to solve FQP with coefficients as triangular fuzzy numbers (TFN). The proposed algorithm converts FQP into two parametric quadratic programming (QP) problems. These QP solutions provide a lower and upper bound on the objective function of FQP. When these two values coincide, an optimal solution is achieved. This algorithm has been analyzed using a numerical example and compared with existing methods.   
\end{abstract}
\keywords{Fuzzy Quadratic Programming, Triangular Fuzzy Number, $\alpha-$cut}
\section{Introduction}
Quadratic programming (QP) is an optimization technique and a generalization of linear programming (LP). It's theoretical foundation was developed by M. Frank and P. Wolfe \cite{frank1956algorithm}. QP is widely used in regression analysis, portfolio optimization, image and signal processing, etc. It is a nonlinear optimization problem that has a quadratic objective function with linear constraints. It can be formulated in a general form as \cite{liberman1988introduction},
	\begin{equation} \label{eqn: QDTC 1.1}
		\begin{split}
			\text{min}\; z=\sum_{j=1}^{n} c_{j}x_{j}+\frac{1}{2} \sum_{i=1}^{n} \sum_{j=1}^{n}q_{ij}x_{i}x_{j}\\       
		\text{s.t.} \; \; \; \; \;	 \sum_{j=1}^{n} a_{ij}x_{j} \leq b_{i}, \; i=1,\dots,n \\
			x_{j} \geq 0, \; j=1,\dots,n
		\end{split}
	\end{equation}
where $x_{i}$ are decision variables to be determined, $c_i$ represent cost coefficients, and $q_{ij}$ are the coefficients of quadratic form for $i,j=1,2,\dots,n$. Note that the quadratic matrix $Q= \left[q_{ij} \right]$ is symmetric and positive definite. The constraint coefficients and right hand constants are denoted by $a_{ij}$ and $b_i$ respectively.  \\
The above formulation does not take into account the uncertain and vague data. But it can be dealt with using fuzzy sets and logic \cite{zadeh1996fuzzy}. For this reason, fuzzy optimization techniques have become very useful. Zimmermann first applied this theory to mathematical programming \cite{zimmermann1978fuzzy}. Recently, many ideas have been proposed to formulate and solve QP problems in a fuzzy environment. For instance, Shiang-Tai Liu formulated QP as an FQP where all the coefficients are triangular fuzzy numbers (TFN) \cite{liu2009revisit}. Liu has developed an algorithm where he proposed to create a pair of two-level mathematical programs to calculate the upper and lower bound of the objective function of FQP \cite{liu2007solving, liu2009revisit}. Then the optimal solution was derived by solving two QP. Amin Mansoori, Sohrab Effati, and Mohammad Eshangnezhad designed a one-layer structured neural network model for solving the FQP \cite{mansoori2018neural}. They changed the FQP to a bi-objective problem, then reduced it to a weighting problem, and finally constructed a Lagrangian dual. They have solved their proposed dynamical system by ODE method. To plan a waste management system under uncertainty, Y. P. Li and G. H. Huang developed fuzzy two-stage quadratic programming (FTSQP) \cite{li2007fuzzy}. Q. Tan, G. H. Huang, and Y. P. Cai developed a superiority-inferiority-based inexact fuzzy stochastic quadratic programming (SI-IFSQP) model for sustainable water supply under multiple uncertainties \cite{tan2013multi}.\\
Motivated by the existing and recent developments in fuzzy optimization, a new algorithm has been developed in this paper to solve FQP. This paper has been organized as,
in section 2, the definitions and properties of fuzzy sets have been discussed briefly. In section 3, a new approach has been proposed to solve FQP by converting it into two QP problems. The proposed method will be demonstrated using a numerical example in the next section. Then a comparison has been presented with existing methods. Finally, a conclusion has been made.

\section{Preliminaries}
A few definitions and properties of fuzzy set theory and optimization are briefly explored in this section \cite{wang2003directional,wu2004evaluate, zimmermann1978fuzzy}.  
	\begin{definition} 
	Let $\mathcal{X}$ be a universal set. Then a fuzzy set $\tilde{A}$ in $\mathcal{X}$ with the membership function $\mu_{\tilde{A}} : \mathbb{R} \rightarrow [0,1]$ is defined as, 
		\begin{equation*}\label{eqn: FZY 1.1}
			\tilde{A}=\{(x,\mu_{\tilde{A}}(x)) : x \in \mathcal{X}\}
		\end{equation*}
	Support of $\tilde{A}$ is a crisp set $C(\tilde{A})=\{x \in \mathcal{X}: \mu_{\tilde{A}}(x)>0\}$. 
	\end{definition}
    \begin{definition} 
    	The $\alpha-$level set of $\tilde{A}$ is a crisp set $A_{\alpha}$, defined as
    	\begin{equation*}
    		A_{\alpha}=\{x\in \mathcal{X}: \mu_{\tilde{A}}(x) \geq \alpha\}, \alpha \in \left[0,1\right]
    	\end{equation*}
    \end{definition}
    \begin{definition} 
    	$\tilde{A}$ is called convex if $\mu_{\tilde{A}} \left(\lambda x_1 + (1-\lambda) x_2\right) \geq min \{\mu_{\tilde{A}}(x_1), \mu_{\tilde{A}}(x_2)\}$, where $x_1, x_2 \in \mathcal{X}$, $\lambda \in \left[0,1\right]$.
    \end{definition}
\begin{definition} 
   If $\mu_{\tilde{A}}=1$ for at least one $x\in \mathcal{X}$ then $\tilde{A}$ is called normal.
\end{definition}
\begin{definition}  
    If $\tilde{A}$ is a normal, and convex fuzzy set with a bounded support then it is called a fuzzy number (FN). 
\end{definition}
\begin{definition} 
	The Triangular fuzzy number (TFN) $\tilde{A}=\left(a_{1}, a_{2}, a_{3}\right)$ is a FN with the membership function defined as
	\begin{equation}
		\mu_{\tilde{A}}(x)=\begin{cases} 
			\frac{x-a_{2}}{a_{2}-a_{1}}, & x \in \left[a_{1},a_{2}\right] \\
			\frac{a_{3}-x}{a_{3}-a_{2}}, & x \in \left[a_{2}, a_{3}\right] \\
			0, & x<a_{1} \; and \; x>a_{3}
		\end{cases}
	\end{equation}
where $a_{1}, a_{2}, a_{3} \in \mathbb{R}$ and $a_{1} \leq a_{2} \leq a_{3}$. 
\end{definition}
Note that a FN can be fully and uniquely represented by its $\alpha-$cut. The $\alpha-$cut for a TFN is $\tilde{A}_{\alpha}=\left[A_{\alpha}^{L}, A_{\alpha}^{U}\right]$, $\forall \alpha \in \left[0,1\right]$, where $A_{\alpha}^{L}= a_{1}+\alpha(a_{2}-a_{1})$, and $A_{\alpha}^{U}=a_{3}-\alpha(a_{3}-a_{2})$. This representation allow us to perform arithmetic operation between two FN $\tilde{A}$, and $\tilde{B}$ as,    
\begin{enumerate}[label=\roman*]
	\item Addition: $(\tilde{A}+\tilde{B})_{\alpha}=[A^{L}_{\alpha}+B^{L}_{\alpha}, A^{U}_{\alpha}+B^{U}_{\alpha}]$
	\item Scalar Multiplication: If $k>0$ then $(k\tilde{A})_{\alpha}=[kA^{L}_{\alpha}, k A^{U}_{\alpha}]$ and  $(k\tilde{A})_{\alpha}=[k A^{U}_{\alpha}, kA^{L}_{\alpha}]$ if $k<0$. 
	\item Multiplication: $(\tilde{A}.\tilde{B})_{\alpha}= \left[M_{\min},M_{\max}\right]$, where $M_{min}=\min(A^{L}_{\alpha} B^{L}_{\alpha},A^{L}_{\alpha} B^{U}_{\alpha}, A^{U}_{\alpha}B^{L}_{\alpha}, A^{U}_{\alpha}B^{U}_{\alpha})$ \\ and $M_{\max}= \max(A^{L}_{\alpha} B^{L}_{\alpha},A^{L}_{\alpha} B^{U}_{\alpha}, A^{U}_{\alpha}B^{L}_{\alpha}, A^{U}_{\alpha}B^{U}_{\alpha})$. 
\end{enumerate}
A subset $A$ of $X$ is said to be convex, if $\lambda x+ \left(1-\lambda \right)y \in A$ whenever $x,y \in A$ and $\lambda \in \left(0,1 \right)$. Let $E$ denotes a collection of TFN. Then, 
\begin{definition}
    A fuzzy mapping $\tilde{f}: A \rightarrow E$ defined on a convex susbset $A$ in $\mathcal{X}$ is convex, if and only if, 
    \begin{equation*}
        \tilde{f} \left(\lambda x+\left(1-\lambda \right)y \right) \leq \lambda \tilde{f}(x) + \left (1-\lambda \right) \tilde{f}(y), \forall x, y \in A, \lambda \in \left[0,1\right]
    \end{equation*}
If $\tilde{f} \left(\lambda x+\left(1-\lambda \right)y \right) < \lambda \tilde{f}(x) + \left (1-\lambda \right) \tilde{f}(y), \forall x, y \in A, x \neq y, \lambda \in \left(0,1\right)$, then $\tilde{f}$ is called strictly convex.  
\end{definition}
Like the $\alpha$-cut of a fuzzy number, a fuzzy mapping $\tilde{f}$ can be written as follow, 
\begin{equation*}
    \tilde{f}(x)= \left[f^{L}_{\alpha}, f^{U}_{\alpha}\right], \forall \alpha \in \left[0,1\right]
\end{equation*}
Now a fuzzy optimization problem can be defined as,
\begin{definition}
Let $\Omega \subset \mathbb{R}^{n}$ be a convex set which contains all feasible solutions, then
    \begin{equation*}
              \begin{split}
                \min \; \tilde{f}(x) & = \left[f^{L}_{\alpha}, f^{U}_{\alpha}\right]\\
             \text{s.t.} \; \; x & \in \Omega
        \end{split}  
    \end{equation*}
\end{definition}
In the following section, FQP has been formulated with fuzzy coefficients as TFN.
\section{Fuzzy Quadratic Programming }
FQP is similar to QP in terms of formulation. It can be formulated as, 
\begin{equation} \label{eqn: QDTC 1.2}
	\begin{split}
		\min \; \tilde{z}=\sum_{j=1}^{n} \tilde{c}_{j}x_{j}+\frac{1}{2} \sum_{i=1}^{n} \sum_{j=1}^{n} \tilde{q}_{ij}x_{i}x_{j}\\
		\text{s.t.} \; \; \; \; \; \sum_{j=1}^{n} \tilde{a}_{ij}x_{j} \leq \tilde{b}_{i}, i=1,\dots,n\\
		x_{j} \geq 0, \; j=1,\dots,n
	\end{split}	
\end{equation}
where the coefficients $\tilde{c},\tilde{q}_{ij}, \tilde{a}_{ij}$ and $\tilde{b}$ are triangular fuzzy number (TFN). For $i,j=1,2,\dots,n$ let $\tilde{c}_{j}=\left(c^{1}_{j},c^{2}_{j},c^{3}_{j}\right)$, $\tilde{q}_{ij}= \left(q^{1}_{ij}, q^{2}_{ij}, q^{3}_{ij}\right)$, $\tilde{a}_{ij}= \left(a^{1}_{ij}, a^{2}_{ij}, a^{3}_{ij}\right)$, and $\tilde{b}_{i}= \left(b^{1}_{i}, b^{2}_{i}, b^{3}_{i}\right)$. In vector matrix notation (\ref{eqn: QDTC 1.2}) can be simplified as,
\begin{equation} \label{eqn: FQDTC 1.1}
    \begin{split}
        \min \; \tilde{Z}  = \tilde{C}^{T}X & +\frac{1}{2} X^{T} \tilde{Q}X \\
         \text{s.t.} \;\; \tilde{A}X & \leq \tilde{b}\\
        X & \geq 0
    \end{split}
\end{equation}
\begin{prop}
If $\tilde{Q}$ is symmetric and positive semi-definite, then $\tilde{f}(x)=\tilde{C}^{T}X+\frac{1}{2}X^{T} \tilde{Q}X$ in (\ref{eqn: FQDTC 1.1}) is a convex fuzzy mapping. 
\end{prop}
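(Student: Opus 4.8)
The plan is to reduce the statement to the classical fact that a quadratic form with a symmetric positive semi-definite matrix is a convex function, applied $\alpha$-level by $\alpha$-level. Recall that a fuzzy number $\tilde A$ is completely and uniquely determined by its family of $\alpha$-cuts $\tilde A_\alpha=[A^{L}_\alpha,A^{U}_\alpha]$, and that the order implicit in the definition of a convex fuzzy mapping is the one induced by $\alpha$-cuts: $\tilde A\le\tilde B$ if and only if $A^{L}_\alpha\le B^{L}_\alpha$ and $A^{U}_\alpha\le B^{U}_\alpha$ for every $\alpha\in[0,1]$. Hence it is enough to prove that, for each fixed $\alpha$, the two real-valued endpoint functions $x\mapsto f^{L}_\alpha(x)$ and $x\mapsto f^{U}_\alpha(x)$ of $\tilde f$ are convex in the ordinary sense on the (convex) feasible set.

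First I would compute these endpoint functions explicitly. Since every feasible $x$ satisfies $x\ge 0$, each entry $x_j$ and each product $x_ix_j$ is nonnegative, so the positive-scalar-multiplication and addition rules for $\alpha$-cuts give
\begin{equation*}
 f^{L}_\alpha(x)=(C^{L}_\alpha)^{T}x+\tfrac12\, x^{T}Q^{L}_\alpha x,\qquad
 f^{U}_\alpha(x)=(C^{U}_\alpha)^{T}x+\tfrac12\, x^{T}Q^{U}_\alpha x,
\end{equation*}
where $C^{L}_\alpha,C^{U}_\alpha$ are the endpoint vectors of $\tilde C$, and, writing $Q^{k}=[q^{k}_{ij}]$ for $k=1,2,3$, $Q^{L}_\alpha=[\,q^{1}_{ij}+\alpha(q^{2}_{ij}-q^{1}_{ij})\,]=(1-\alpha)Q^{1}+\alpha Q^{2}$ and $Q^{U}_\alpha=[\,q^{3}_{ij}-\alpha(q^{3}_{ij}-q^{2}_{ij})\,]=(1-\alpha)Q^{3}+\alpha Q^{2}$.

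Next I would check that $Q^{L}_\alpha$ and $Q^{U}_\alpha$ are symmetric and positive semi-definite for every $\alpha\in[0,1]$. Symmetry is immediate, since symmetry of $\tilde Q$ means $q^{k}_{ij}=q^{k}_{ji}$ for $k=1,2,3$. For positive semi-definiteness I would read the hypothesis ``$\tilde Q$ positive semi-definite'' as: each of $Q^{1},Q^{2},Q^{3}$ is positive semi-definite (equivalently, $x^{T}\tilde Q x$ is a nonnegative TFN for all $x$); then $Q^{L}_\alpha$ and $Q^{U}_\alpha$, being convex combinations of positive semi-definite matrices, are themselves positive semi-definite. Therefore each of $f^{L}_\alpha$ and $f^{U}_\alpha$ is a linear function plus a quadratic form with a symmetric PSD matrix, hence a convex function on $\mathbb{R}^{n}$ and in particular on the feasible set.

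Finally, fix feasible $x,y$ and $\lambda\in[0,1]$. Ordinary convexity of $f^{L}_\alpha$ and of $f^{U}_\alpha$ yields $f^{L}_\alpha(\lambda x+(1-\lambda)y)\le\lambda f^{L}_\alpha(x)+(1-\lambda)f^{L}_\alpha(y)$ together with the analogous inequality for $f^{U}_\alpha$. Since the $\alpha$-cut of $\lambda\tilde f(x)+(1-\lambda)\tilde f(y)$ is exactly $[\,\lambda f^{L}_\alpha(x)+(1-\lambda)f^{L}_\alpha(y),\ \lambda f^{U}_\alpha(x)+(1-\lambda)f^{U}_\alpha(y)\,]$ (again by the positive-scalar and addition rules), these two scalar inequalities, valid for every $\alpha$, amount precisely to $\tilde f(\lambda x+(1-\lambda)y)\le\lambda\tilde f(x)+(1-\lambda)\tilde f(y)$, i.e.\ $\tilde f$ is a convex fuzzy mapping. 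The one place that genuinely needs care is the positive-semi-definiteness step: one must pin down the meaning of a positive semi-definite fuzzy matrix and confirm that the property descends to every $\alpha$-level endpoint matrix $Q^{L}_\alpha,Q^{U}_\alpha$; everything else is a level-wise invocation of the classical convexity criterion. (If $\tilde Q$ is in fact positive definite, the same computation with strict inequalities shows $\tilde f$ is strictly convex.)
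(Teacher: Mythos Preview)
Your proof is correct and takes a genuinely different route from the paper's. The paper proceeds by a direct algebraic expansion of $\tilde f(\lambda x+(1-\lambda)y)$: it writes $\lambda x+(1-\lambda)y=y+\lambda(x-y)$, expands the quadratic form, and uses $\lambda^{2}\le\lambda$ together with $(x-y)^{T}\tilde Q(x-y)\ge 0$ to obtain the convexity inequality in one sweep---essentially the classical real-valued argument carried out with the fuzzy coefficients treated formally. You instead pass to the $\alpha$-level endpoint functions $f^{L}_\alpha,f^{U}_\alpha$, identify the endpoint matrices $Q^{L}_\alpha,Q^{U}_\alpha$ as convex combinations of $Q^{1},Q^{2},Q^{3}$, and invoke ordinary convexity level by level. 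The paper's route is shorter and mirrors the textbook computation, but leaves implicit what the fuzzy order and the PSD hypothesis actually mean; your route is a bit longer but more transparent, making explicit both the ordering on fuzzy numbers and the interpretation of ``$\tilde Q$ positive semi-definite,'' and it cleanly isolates the one nontrivial step (that PSD descends to every $Q^{L}_\alpha,Q^{U}_\alpha$). One small caveat: your parenthetical ``equivalently, $x^{T}\tilde Q x$ is a nonnegative TFN for all $x$'' is not literally equivalent to each $Q^{k}$ being PSD once $x$ has mixed signs, since the lower endpoint of $x^{T}\tilde Q x$ then mixes entries of $Q^{1}$ and $Q^{3}$; however, the reading ``each $Q^{k}$ PSD'' is what your argument actually uses, so the main line is unaffected.
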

\begin{proof}
For any $\lambda \in \left[0,1\right]$, $\lambda x + (1-\lambda)y \in \Omega$, implies,
\begin{equation*}
    \begin{split}
     \tilde{f} \left(x \lambda+(1-\lambda)y\right) &= \tilde{C}^{T} (x \lambda+(1-\lambda)y)+\frac{1}{2} \left(\lambda x+ (1-\lambda)y \right)^{T} \tilde{Q} \left(\lambda x+ (1-\lambda) y\right)\\
     &= \tilde{C}^{T} (x \lambda+(1-\lambda)y)+\frac{1}{2} \left(y+ \lambda (x-y) \right)^{T} \tilde{Q} \left(y+ \lambda (x-y) \right)\\
     &= \tilde{C}^{T} (x \lambda+(1-\lambda)y)+\frac{1}{2} \left(y^{T}\tilde{Q}y+ \lambda^{2} (x-y)^{T} \tilde{Q}(x-y)+2 \lambda y^{T} \tilde{Q} (x-y) \right)\\
     & \leq \tilde{C}^{T} (x \lambda+(1-\lambda)y)+\frac{1}{2} \left(y^{T}\tilde{Q}y+ \lambda (x-y)^{T} \tilde{Q}(x-y)+2 \lambda y^{T} \tilde{Q} (x-y) \right)\\
     &=\tilde{C}^{T} (x \lambda+(1-\lambda)y)+\frac{1}{2} \left(y^{T}\tilde{Q}y+ \lambda (x+y)^{T} \tilde{Q}(x-y)\right)\\
     &= \tilde{C}^{T} (x \lambda+(1-\lambda)y)+\frac{1}{2} \left(\lambda x^{T} \tilde{Q}x+(1-\lambda) y^{T} \tilde{Q}y \right)\\
     &= \lambda \left(\tilde{C}^{T}x+\frac{1}{2}X^{T}\tilde{Q}x \right)+ \left(1-\lambda\right) \left(\tilde{C}^{T}y+\frac{1}{2}y^{T}\tilde{Q}y \right)\\
     &=\lambda \tilde{f}(x)+\left(1-\lambda \right) \tilde{f}(y)
    \end{split}
\end{equation*}
\end{proof}
Since the objective function of equation (\ref{eqn: FQDTC 1.1}) is a convex fuzzy mapping and $\{X: X \geq 0, \tilde{A}X \leq \tilde{b}\}$ is a convex feasible set, so (\ref{eqn: FQDTC 1.1}) is a convex fuzzy programming. Now if $X^{*}$ is a local optimal solution of (\ref{eqn: FQDTC 1.1}), then it will be a global solution of (\ref{eqn: FQDTC 1.1}) \cite{mansoori2018neural}. Also, if the objective function is strictly convex, the $X^{*}$ is the unique global optimal solution of (\ref{eqn: FQDTC 1.1}) according to \cite{mansoori2018neural}. 
In the next section, the proposed method has been discussed. 
\section{Proposed Method} 
First, define the real-life problem as (\ref{eqn: QDTC 1.2}). The arithmetic operations that have been used here are defined in section 2. Continue with the following steps.

\textbf{Step-1 :} To determine the $\alpha$-optimal value by taking the $\alpha$-cut of the objective function and the constraints (\ref{eqn: QDTC 1.2}) can be written as,
\begin{equation}\label{eqn: QDTC 1.3}
	\begin{split}
		\min \; \left[z^{L}_{\alpha}, z^{U}_{\alpha } \right]= \sum_{j=1}^{n} \left[c^{L}_{\alpha,j}, c^{U}_{\alpha,j}\right]x_{j}+ \frac{1}{2} \sum_{i=1}^{n} \sum_{j=1}^{n} \left[q^{L}_{\alpha,ij}, q^{U}_{\alpha,ij}\right]x_{i}x_{j} \\
		\text{s.t.} \; \; \; \; \; \;  \sum_{j=1}^{n} \left[a^{L}_{\alpha,ij}, a^{U}_{\alpha,ij}\right] x_{j}  \leq [b^{L}_{\alpha,i}, b^{U}_{\alpha,i}], \; i=1,\dots,n\\
		x_{j} \geq 0, \; \; j=1, \dots,n
	\end{split}
\end{equation}
\textbf{Step-2 :} Formulate the following QP for different $\alpha$ values to obtain the lower bound $z^L_{\alpha}$ of the objective function.
\begin{equation}\label{eqn: ALG 1.1}
	\begin{split}
		\min z^{L}_{\alpha} =\sum_{j=1}^{n} c^{L}_{\alpha,j}x_{j}+\frac{1}{2} \sum_{i=1}^{n} \sum_{j=1}^{n} q^{L}_{\alpha,ij} x_{i} x_{j} \\
		\text{s.t.} \; \; \; \; \sum_{j=1}^{n} a^{L}_{\alpha, ij} x_{j} \leq b^{L}_{\alpha, i}, \; \; i=1,\dots,n \\
		x_{j} \geq 0, \; \; j=1,\dots,n
	\end{split}
\end{equation}
\textbf{Step-3 :} Similarly, to find the upper bound $z^{U}_{\alpha}$ of $\tilde{z}$ for different $\alpha$  values formulate the following QP. 
\begin{equation} \label{eqn: ALG 1.2}
	\begin{split}
		\min z^{U}_{\alpha } =\sum_{j=1}^{n} c^{U}_{\alpha,j}x_{j}+\frac{1}{2} \sum_{i=1}^{n} \sum_{j=1}^{n} q^{U}_{\alpha,ij} x_{i} x_{j} \\
		\text{s.t.} \; \; \; \; \sum_{j=1}^{n} a^{U}_{\alpha, ij} x_{j} \leq b^{U}_{\alpha, i}, \; \; i=1,\dots,n \\
		x_{j} \geq 0, \; \; j=1,\dots,n
	\end{split}
\end{equation}
\textbf{Step-4 :} Choose an initial value of $x_0$ for the decision variables and solve the problem (\ref {eqn: ALG 1.1}) for a fixed $\alpha \in \left [0,1 \right]$.
\begin{equation*}
	\begin{split}
		y^{L}_{k}&=x_{k}-\frac{1}{K_{\alpha}} \nabla z^{L}_{\alpha}\\
		x_{k+1} &=proj_{\mathbb{S}_{L}} \left(y^{L}_{k}\right)
	\end{split}
\end{equation*}
 For fixed $\alpha$, $K_{\alpha}$ is the Lipschitz constant of $z^{L}_{\alpha}$ and $proj_{\mathbb{S}_{L}}$ is the projection on the half space $\mathbb{S}_{L}= \{x: A^{L}_{\alpha}x \leq b^{L}_{\alpha}\}$.

\textbf{Step-5 :} Follow the same procedure as step-4 and perform the following steps for (\ref{eqn: ALG 1.2}). 
\begin{equation*}
	\begin{split}
		y^{U}_{k}&=x_{k}-\frac{1}{M_{\alpha}} \nabla z^{U}_{\alpha}\\
		x_{k+1} &=proj_{\mathbb{S}_{U}} \left(y^{U}_{k}\right)
	\end{split}
\end{equation*}
Like previous step, $M_{\alpha}$ is the Lipschitz constant of $z^{U}_{\alpha}$ and $proj_{\mathbb{S}_{U}}$ is the projection on the half space $\mathbb{S}_{U}= \{x: A^{U}_{\alpha}x \leq b^{U}_{\alpha}\}$.

\textbf{Step-6 :} If $z^{L}_{\alpha}=z^{U}_{\alpha} $ for any $\alpha$, then stop. 
\section{Numerical Example}
In this section, effectiveness of the proposed algorithm has been analyzed with an example.
Consider the following FQP from \cite{liu2009revisit}
\begin{equation*}\label{eqn: EXAMPLE 1.1}
\begin{split}
			\min \; \tilde{z}  = \left(-6,-5,-4\right)x_{1} + \left(1,1.5,2\right)x_{2} +\frac{1}{2} \left[ \left(4,6,8\right)x_{1}^{2}+\left(-6,-4,-2\right)x_{1}x_{2}+\left(2,4,6\right)x_{2}^{2}\right]\\
\end{split}
\end{equation*}
\begin{equation*}
\begin{split}
		 \text{s.t.} \; \; \; \; \; \; \; x_{1}+\left(0.5,1,1.5\right)x_{2}  \leq \left(1,2,3\right) \\
		\;\; \left(1,2,3\right)x_{1}+\left(-2,-1,-0.5\right)x_{2} & \leq \left(3,4,5\right)\\
		 x_{1}, x_{2} \geq 0 &
\end{split}
\end{equation*}
At first, two QP models will be formulated according to Step-$3$ and $4$. For any $\alpha\in \left[0,1 \right]$ the lower bound $z^{L}_{\alpha}$ of $\tilde{z}$ will be obtained by solving,  
 \begin{equation}\label{eqn: EXAMPLE 1.2}
 	\begin{split}
 \min\; \; z^{L}_{\alpha}= \left(-6+\alpha\right)x_{1}+\left(1+0.5\alpha\right)x_2+ \frac{1}{2} & \left(x_1\; x_2\right)  \left( \begin{array}{cc}
 	4+2\alpha & -3+\alpha \\
 	-3+\alpha & 2+2\alpha
 \end{array} \right)
 \left( \begin{array}{cc}
 	x_1 \\
 	x_2
 \end{array} \right)\\
 		\text{s.t.} \;
		\left( \begin{array}{cc}
			1 & 0.5+0.5\alpha \\
			1+\alpha & -2+\alpha
		\end{array} \right)
		\left( \begin{array}{cc}
			x_1 \\
			x_2
		\end{array} \right) & \leq 
		\left( \begin{array}{cc}
			1+\alpha \\
			2+2\alpha
		\end{array} \right)\\
		x_{1}, x_{2} & \geq 0 
\end{split}
 \end{equation}
Similarly, to get the upper bound of $\tilde{Z}$, solve the following QP.  
 \begin{equation}\label{eqn: EXAMPLE 1.3}
	\begin{split}
		\min \;\; z^{U}_{\alpha} = \left(-4-\alpha\right)x_{1}+\left(2-0.5\alpha\right)x_2+& \frac{1}{2} \left(x_1\; x_2\right)  \left( \begin{array}{cc}
			8-2\alpha & -1-\alpha \\
			-1-\alpha & 6-2\alpha
		\end{array} \right)
		\left( \begin{array}{cc}
			x_1 \\
			x_2
		\end{array} \right)\\
	\text{s.t.} \; \;
		\left( \begin{array}{cc}
			1 & 1.5-0.5\alpha \\
			3-\alpha & 0.5-0.5\alpha
		\end{array} \right) &
		\left( \begin{array}{cc}
			x_1 \\
			x_2
		\end{array} \right)  \leq 
		\left( \begin{array}{cc}
			3-\alpha \\
		6-2\alpha
		\end{array} \right)\\
		x_{1}, x_{2} & \geq 0 
	\end{split}
\end{equation}
The detailed results with comparison to existing methods have been discussed in the following section.
\section{Results and Discussion}
For different values of $\alpha$, the objective function values of (\ref{eqn: EXAMPLE 1.2}) and (\ref{eqn: EXAMPLE 1.3}) have been presented in Table 1. Solutions obtained by \cite{liu2009revisit} and \cite{mansoori2018neural} are also presented in the same table with computation time. In Figure 1, the membership function of $\tilde{z}$ obtained by the proposed method has been presented along with Liu and Mansoori's method. In Figure 2, a comparison between the proposed method and the existing methods is shown.
\begin{table}[H]
\onehalfspacing
\centering{\captionof{table}{Objective values for different $\alpha$}}
	\begin{tabular}{@{}cccccccc@{}}
		\toprule
		$\alpha$                                                                                                                         & 0.0     & 0.2     & 0.4     & 0.6     & 0.8     & 1.0     & \begin{tabular}[c]{@{}c@{}}CPU\\ time (s)\end{tabular}\\ \midrule
		\begin{tabular}[c]{@{}c@{}}$z^{L}_{\alpha}$ in  Proposed \\ Method\end{tabular}                                                  & -4.0833 & -4.0503 & -3.7271 & -3.1306 & -2.4891 & -2.0872 & 0.032       \\
		
		\begin{tabular}[c]{@{}c@{}}$z^{U}_{\alpha}$ in Proposed\\  Method\end{tabular}                                                   & -1      & -1.1605 & -1.3444 & -1.5559 & -1.8    & -2.0872 & 0.029      \\
		
		\begin{tabular}[c]{@{}c@{}}$z^{L}_{\alpha}$ in Liu's \cite{liu2009revisit}\\ Method\end{tabular}                                                      & -10.08  & -6.72   & -4.46   & -3.14   & -2.49   & -2.09   & 0.050        \\
		
		\begin{tabular}[c]{@{}c@{}}$z^{U}_{\alpha}$ in Liu's \cite{liu2009revisit}\\  Method\end{tabular}                                                      & -1      & -1.16   & -1.34   & -1.45   & -1.80   & -2.09   & 0.055        \\
		
		\begin{tabular}[c]{@{}c@{}}$z^{L}_{\alpha}$ in Mansoori's\\ Method \cite{mansoori2018neural} with \\ $w_1=\frac{1}{4}$, $w_{2}=\frac{3}{4}$\end{tabular} & -1.4464 & -1.559  & -1.6735 & -1.8    & -1.9363 & -2.0875 & 0.033        \\
		
		\begin{tabular}[c]{@{}c@{}}$z^{U}_{\alpha}$ in Mansoori's\\ Method \cite{mansoori2018neural}with\\  $w_1=\frac{1}{3}$, $w_2=\frac{2}{3}$\end{tabular}     & -1.6333 & -1.7146 & -1.8    & -1.8997 & -1.9841 & -2.0875 & 0.034        \\ \bottomrule
	\end{tabular}
\end{table}
From the above table, it is clear that the sequence of objective values obtained by the proposed algorithm has converged at the same point as the existing methods. It generates more accurate solutions than others. In the following figure, the convergence of objective values for different methods is shown in terms of the triangular membership function. 
\begin{figure}[H] 
	\centering 
		\includegraphics[scale=0.6]{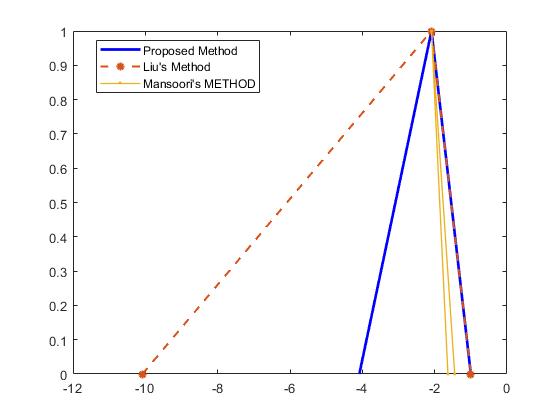} 
		\caption{Membership function for the Proposed Method vs Existing Methods} 
\end{figure} 
The gap between the lower and upper bounds of the objective values of Liu's method is larger than that of the proposed method. Although this gap is larger than Mansoori's method, anyone can make an intuition that the proposed method is slower than Mansoori's method. But the computation time tells another story. Though it could be easily observed from Table 1, let's look at the following figure.
\begin{figure}[H] 
	\centering 
	\includegraphics[scale=0.6]{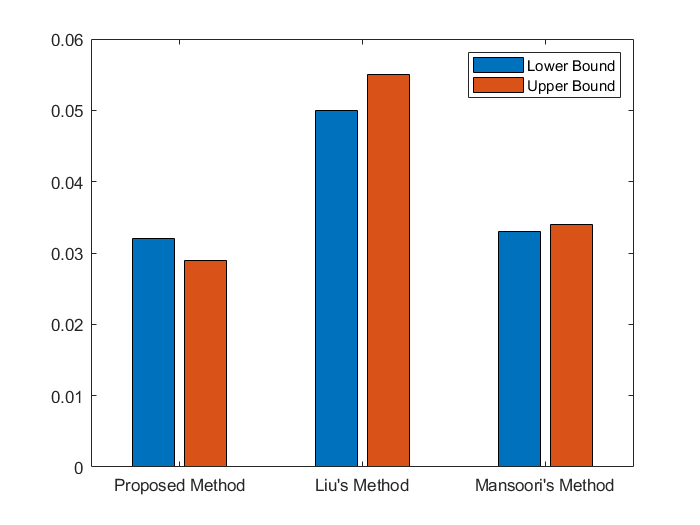} 
	\caption{Computational Time Comparison} 	
\end{figure} 
The proposed algorithm has taken 0.0305 seconds on average to solve the problem. On the other hand, Mansoori's method took 0.0335 seconds and Liu's method took 0.0525 seconds on average.
\section{Conclusion}
Solving FQP is more challenging than QP. In this paper, a new approach has been proposed to solve FQP. This approach has generated more accurate results than many existing methods while using less computation time. This idea can be easily extended to solve multi-objective FQP, fuzzy regression analysis, etc., which is the future goal of this work.

\bibliography{references.bib}

\begin{thebibliography}{10}

\bibitem{frank1956algorithm}
Marguerite Frank and Philip Wolfe.
\newblock An algorithm for quadratic programming.
\newblock {\em Naval research logistics quarterly}, 3(1-2):95--110, 1956.

\bibitem{li2007fuzzy}
YP~Li and Guo~H Huang.
\newblock Fuzzy two-stage quadratic programming for planning solid waste
  management under uncertainty.
\newblock {\em International Journal of Systems Science}, 38(3):219--233, 2007.

\bibitem{liberman1988introduction}
Hiller Liberman.
\newblock {\em Introduction to operations research}.
\newblock Libros McGraw-Hill de Mexico, 1988.

\bibitem{liu2007solving}
Shiang-Tai Liu.
\newblock Solving quadratic programming with fuzzy parameters based on
  extension principle.
\newblock In {\em 2007 IEEE International Fuzzy Systems Conference}, pages
  1--5. IEEE, 2007.

\bibitem{liu2009revisit}
Shiang-Tai Liu.
\newblock A revisit to quadratic programming with fuzzy parameters.
\newblock {\em Chaos, Solitons \& Fractals}, 41(3):1401--1407, 2009.

\bibitem{mansoori2018neural}
Amin Mansoori, Sohrab Effati, and Mohammad Eshaghnezhad.
\newblock A neural network to solve quadratic programming problems with fuzzy
  parameters.
\newblock {\em Fuzzy Optimization and Decision Making}, 17(1):75--101, 2018.

\bibitem{tan2013multi}
Qian Tan, GH~Huang, and YP~Cai.
\newblock Multi-source multi-sector sustainable water supply under multiple
  uncertainties: an inexact fuzzy-stochastic quadratic programming approach.
\newblock {\em Water resources management}, 27(2):451--473, 2013.

\bibitem{wang2003directional}
Guixiang Wang and Congxin Wu.
\newblock Directional derivatives and subdifferential of convex fuzzy mappings
  and application in convex fuzzy programming.
\newblock {\em Fuzzy Sets and Systems}, 138(3):559--591, 2003.

\bibitem{wu2004evaluate}
Hsien-Chung Wu.
\newblock Evaluate fuzzy optimization problems based on biobjective programming
  problems.
\newblock {\em Computers \& Mathematics with Applications}, 47(6-7):893--902,
  2004.

\bibitem{zadeh1996fuzzy}
Lotfi~A Zadeh.
\newblock Fuzzy sets.
\newblock In {\em Fuzzy sets, fuzzy logic, and fuzzy systems: selected papers
  by Lotfi A Zadeh}, pages 394--432. World Scientific, 1996.

\bibitem{zimmermann1978fuzzy}
H-J Zimmermann.
\newblock Fuzzy programming and linear programming with several objective
  functions.
\newblock {\em Fuzzy sets and systems}, 1(1):45--55, 1978.

\end{thebibliography}

\end{document}